\newtheorem{theo}{Theorem}[section]
\newtheorem{prop}[theo]{Proposition}
\newtheorem{lemm}[theo]{Lemma}
\newtheorem{coro}[theo]{Corollary}
\newtheorem{defi}[theo]{Definition}
\numberwithin{equation}{section}
\newcommand{\A}{\mathcal{A}} 
\newcommand{\T}{\mathbb{T}} 
\newcommand{\R}{\mathbb{R}} 
\newcommand{\Z}{\mathbb{Z}} 
\renewcommand{\H}{\mathcal{H}} 
\renewcommand{\a}{\alpha} 
\newcommand{\oh}{{\tfrac{1}{2}}} 
\newcommand{\sq}{\unskip\nobreak\kern5pt\nobreak\vrule height4pt width4pt depth0pt} 
\newbox\ncintdbox \newbox\ncinttbox
\title{Wodzicki residue and minimal operators on a noncommutative 4-dimensional torus}
\author{Andrzej Sitarz${}^{1,2}$ \thanks{Partially supported by NCN grant 2011/01/B/ST1/06474.}}
\affil{${}^1$ Institute of Physics, Jagiellonian University,\\
Reymonta 4, 30-059 Krak\'ow, Poland,}
\affil{${}^2$ Institute of Mathematics of the Polish Academy of Sciences, \\ 
\'Sniadeckich 8, Warszawa, 00-950 Poland.} 
\begin{document}
\maketitle
\begin{abstract}
We compute the Wodzicki residue of the inverse of a conformally rescaled Laplace 
operator over a $4$-dimensional noncommutative torus. We show that the straightforward 
generalization of the Laplace-Beltrami operator to the noncommutative case is not the 
minimal operator.
\end{abstract}
\thispagestyle{empty}
\section{Introduction}

Noncommutative geometry as proposed in \cite{Co94} aims to use geometric methods to study 
noncommutative algebras in a similar way differential  geometry is used to study spaces. One of the appealing potential applications is its use in physics to describe the structure of space-time and fundamental 
interactions at high energies. Although the construction of basic data in noncommutative  geometry is 
equivalent in the classical case to the standard data of Riemannian geometry \cite{Co10} in the genuine noncommutative examples this aspect has been not explored sufficiently until recently. 

In a series of papers \cite{FaKh1}-\cite{FaWo} a conformally rescaled metric has been proposed and 
studied for the noncommutative two and four-tori. This led to the expressions of Gauss-Bonnet theorem and formulas for the noncommutative counterpart of scalar curvature. Independently, another class of Dirac operators and metrics with the geometric interpretation as arising from the $U(1)$ connections on noncommutative circle bundles has been proposed by the author and L.Dabrowski in \cite{DaSi}. Following 
this lead, a more general type of metric on the two torus has been proposed and studied perturbatively 
for the two-torus \cite{DaSi2}. 

However, in all of the mentioned approaches one major assumption was made. The second term 
of the heat-kernel asymptotics of the rescaled Laplace operator (or square of the Dirac operator) was 
identified as a linear functional of the scalar curvature. In Riemannian geometry this is certainly true, 
provided that the Laplace operator (or Dirac operator) comes from the Levi-Civita connection. In the 
presence of the nontrivial torsion the term is modified and includes the integral of the square of the 
torsion, as has been observed already in the early computations of Seeley-Gilkey-de Witt coefficients. 
As in the noncommutative geometry there is no implicit notion of torsion, one may wonder whether 
the rescaled Laplace operators are those, which minimize the second term of the heat-kernel 
expansion for a fixed metric. Additionally, in the classical case the computation of second heat 
kernel coefficient is closely related to the computations of the Wodzicki residue of a certain power 
of the Dirac operator.

As it has been shown \cite{FaWo} and more generally in \cite{LeJiPa} Wodzicki residue exists 
also in the case of the pseudodifferential calculus over noncommutative tori. 

In this note we shall address the question of the minimal operators (using the Wodzicki residue to
check minimality) and compute the Wodzicki residue for a class of operators on the noncommutative 
torus.
 
\section{Noncommutative tori and their pseudodifferential calculus}

We use the usual presentation of the algebra of $d$-dimensional noncommutative torus as 
generated by $d$ unitary elements $U_i$, $i=1,\ldots,d$, with the relations 
$$ U_j U_k = e^{2\pi i\theta_{jk}} U_k U_j,$$ 
where  $0< \theta_{jk} < 1$ is real. The smooth algebra $\A(\T^d_\theta)$ 
is then taken as an algebra of elements
$$ a = \sum_{\beta \in \Z^d} a_{\beta} U^\beta,, $$
where $a_{\beta}$ is a rapidly decreasing sequence and
$$ U^\beta = U_1^{\beta_1} \cdots U_d^{\beta_d}. $$

The natural action of $U(1)^d$ by automorphisms, gives, in its infinitesimal
form, two linearly independent derivations on the algebra: given on the generators as:
\begin{equation}
\delta_k (U_j) = \delta_{jk}  U_j,
\label{deriv}
\end{equation}
where $\delta_{jk}$ denotes the Kronecker delta.

The canonical trace on $\A(\T^d_\theta)$ is
$$ \mathfrak{t}(a) = \a_{{\mathbf 0}}, $$
where ${\mathbf 0} = \{0,0,\ldots,0\} \in \Z^d$. The trace is invariant  
with respect to the action of $U(1)^d$, hence
$$ \mathfrak{t}(\delta_j(a)) = 0, \quad \forall j =1,\ldots,d.$$

By $\H$ we denote the Hilbert space of the GNS construction with respect to the trace $\mathfrak{t}$ on 
the $C^\ast$ completion of $\A(\T^d_\theta)$ and $\pi$ the associated faithful representation. The elements
of the smooth algebra $\A(\T^d_\theta)$ act on $\H$ as bounded operators by left multiplication, whereas
the derivations $\delta_i$ extend to densely defined selfadjoint operators on $\H$ with the 
smooth elements of the Hilbert space, $\A(\T^d_\theta)$, in their common domain.
\subsection{Pseudodifferential operators on $\T^d_\theta$}

The symbol calculus defined in \cite{CoTr} and developed further in 
\cite{CoMo10} (see also \cite{LeJiPa}) is easily generalized to the $d$-dimensional
case and to the operators defined above. We shall briefly review the basic definitions 
and methods used further in the note. Let us recall that a differential operator of order 
at most $n$ is of the form
$$ P = \sum_{0 \leq k \leq n} \sum_{|\beta_k|=k} a_{\beta_k} \delta^{\beta_k}, $$
where $a_{jk}$ are assumed to be in the algebra $\A(\T^d_\theta)$, $\beta_k \in \Z^d$
and:
$$ |\beta_k| = \beta_1+ \cdots + \beta_d, \;\;\;\;\;
\delta^\beta =  \delta_1^{\beta_1} \cdots \delta_d^{\beta_d}.$$ 
Its symbol is:
$$ \rho({P}) =   \sum_{0 \leq k \leq n} \sum_{|\beta_k|=k} a_{\beta_k} \xi^{\beta_k}, $$
where
$$ \xi^{\beta} =  \xi_1^{\beta_1} \cdots \xi_d^{\beta_d}.$$ 
On the other hand, let $\rho$ be a symbol of order $n$, which is assumed
to be a $C^\infty$ function from $\R^d$ to $\A(\T^d_\theta)$, which is homogeneous 
of order $n$, satisfying certain bounds (see \cite{CoTr} for details). With every such
symbol $\rho$ there is associated an operator $ P_\rho$ on a dense subset of $\H$
spanned by elements $a \in \A(\T^d_\theta)$:
$$ P_\rho(a) = \frac{1}{(2\pi)^d} 
\int_{\R^d \times \R^d} e^{-i \sigma \cdot \xi} \rho(\xi) \alpha_\sigma(a)\, d\sigma d \xi,$$
where
$$ \alpha_\sigma(U^\alpha) = e^{i \sigma \cdot \alpha} U^\alpha, \;\;\; \sigma \in \R^d, \alpha \in \Z^d. $$

For two operators $P,Q$ with symbols:
$$ \rho(P) = \sum p_\alpha \xi^\alpha, \;\;\; \rho(Q) = \sum q_\beta \xi^\beta, $$
we use the formula, which follows directly from the same computations as in the
case of classical calculus of pseudodifferential operators:

\begin{equation}
\rho(P Q) = \sum_\gamma \frac{1}{\gamma !} \partial_\xi^\gamma(\rho(P)) 
\delta^\gamma(\rho(Q)), \label{syprod}
\end{equation}
where $\gamma ! = \gamma_1 ! \cdots \gamma_d !$. 
 

\subsection{Wodzicki Residue}

In this part we shall provide an elementary proof that there exists a trace on the above
defined algebra of symbols on the $d$-dimensional noncommuttaive torus:

\begin{prop} \label{wprop}
Let $\rho = \sum_{j \leq k} \rho_j(\xi)$ be a symbol over the noncommutative torus
$\A(\T^d_\theta)$. Then the functional:
$$ \rho \mapsto \int_{S^{d-1}} \mathfrak{t} \left( \rho_{-d} (\xi) \right) d\xi, $$
is a trace over the algebra of symbols.
 \end{prop}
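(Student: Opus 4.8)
The plan is to show that the functional $\tau(\rho)=\int_{S^{d-1}}\mathfrak t(\rho_{-d}(\xi))\,d\xi$ vanishes on commutators $[\rho(P),\rho(Q)]$, i.e.\ that $\tau(\rho(PQ))=\tau(\rho(QP))$. By the product formula \eqref{syprod}, the degree $-d$ component of $\rho(PQ)$ is
$$ \bigl(\rho(PQ)\bigr)_{-d}=\sum_{\gamma}\frac{1}{\gamma!}\sum_{a+b-|\gamma|=-d}\partial_\xi^\gamma\bigl(\rho(P)_a\bigr)\,\delta^\gamma\bigl(\rho(Q)_b\bigr), $$
and similarly for $QP$ with the roles of $P,Q$ interchanged. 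First I would observe that under the trace $\mathfrak t$ the $\delta^\gamma$ factor can be integrated by parts: since $\mathfrak t\circ\delta_j=0$ and $\mathfrak t$ is a trace on $\A(\T^d_\theta)$, one gets $\mathfrak t\bigl(x\,\delta^\gamma(y)\bigr)=(-1)^{|\gamma|}\mathfrak t\bigl(\delta^\gamma(x)\,y\bigr)$. This reduces the whole expression, after the $S^{d-1}$ integration, to comparing $\sum_\gamma\frac{(-1)^{|\gamma|}}{\gamma!}\int_{S^{d-1}}\mathfrak t\bigl(\delta^\gamma\partial_\xi^\gamma(\rho(P))\,\rho(Q)\bigr)$ against the analogous sum with $P,Q$ swapped.

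The key analytic step is the observation that, because each $\rho(P)_a$ and $\rho(Q)_b$ is homogeneous in $\xi$, the $\gamma=0$ terms already agree: $\int_{S^{d-1}}\mathfrak t(\rho(P)_a\,\rho(Q)_b)=\int_{S^{d-1}}\mathfrak t(\rho(Q)_b\,\rho(P)_a)$ since $\mathfrak t$ is a trace and this is a genuine (pointwise in $\xi$) identity under $\mathfrak t$. For the terms with $|\gamma|\ge 1$ one uses the classical trick from Wodzicki's proof: a homogeneous symbol of degree $-d$ that is a total $\xi$-derivative, $\sum_j\partial_{\xi_j}\sigma_j$ with $\sigma_j$ homogeneous of degree $-d+1$, integrates to zero over $S^{d-1}$ by Stokes' theorem (equivalently, the Euler relation forces $\int_{S^{d-1}}\partial_{\xi_j}\sigma=0$ for $\sigma$ homogeneous of the right degree). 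So I would reorganize the $|\gamma|\ge1$ part of $(\rho(PQ))_{-d}-(\rho(QP))_{-d}$, after applying the integration-by-parts in $\mathfrak t$, as a finite sum of $\xi$-divergences of symbols homogeneous of degree $-d+1$ with coefficients in $\A(\T^d_\theta)$, whence its $S^{d-1}$-integral-composed-with-$\mathfrak t$ vanishes.

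Concretely, I expect the combinatorics to mirror the commutative case exactly, because the only genuinely noncommutative ingredient — the ordering of the algebra factors — is already handled by the tracial property of $\mathfrak t$, while the $\xi$-variable plays the same commutative role as in the classical Wodzicki argument. The main obstacle is therefore bookkeeping rather than conceptual: one must check that the cancellation of the degree $-d$ part of $[\rho(P),\rho(Q)]$ into an exact $\xi$-divergence goes through term by term in $\gamma$, and that every intermediate symbol that appears really is smooth and homogeneous of degree $-d+1$ on $\R^d\setminus\{0\}$ so that Stokes applies. A clean way to organize this is to verify it first for monomial symbols $p_\alpha\xi^\alpha$ and $q_\beta\xi^\beta$, where the homogeneity degrees are explicit and $\partial_\xi^\gamma$, $\delta^\gamma$ act transparently, and then extend by the (absolutely convergent) asymptotic expansion and linearity. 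I would close by noting that vanishing on all such commutators is exactly the statement that $\tau$ descends to a trace on the algebra of symbols.
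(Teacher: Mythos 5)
Your proposal is correct and follows essentially the same route as the paper: expand $(\rho(PQ))_{-d}$ via the product formula (\ref{syprod}), integrate by parts in the derivations $\delta_j$ using $\mathfrak t\circ\delta_j=0$ together with the trace property of $\mathfrak t$, and eliminate the $|\gamma|\ge 1$ contributions by the fact that the sphere integral of a $\xi$-derivative of a symbol homogeneous of degree $1-d$ vanishes. The only (cosmetic) difference is that the paper proves this last fact directly in spherical coordinates (Lemma \ref{hlem}) and matches the $\rho\sigma$ and $\sigma\rho$ contributions multi-index by multi-index, whereas you invoke the classical Stokes/Euler argument and package the discrepancy as an exact $\xi$-divergence.
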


Let us start with a simple lemma about homogeneous functions.

\begin{lemm} \label{hlem}
Let $f$ be a smooth function on $\R^d \setminus \{0\}$, homogeneous of degree $\rho$. 
Then 
$$  \int_{S^{d-1}} \partial_{\xi^i} f(\xi) d\xi =0, $$
holds for every $1 \leq i \leq d$ if and only if $\rho=1-d$.
\end{lemm}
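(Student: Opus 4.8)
The plan is to reduce the spherical integral of $\partial_{\xi^i} f$ to boundary integrals over spheres by means of the divergence theorem on an annular region, and then to extract the role of the degree $\rho$ from the powers of the radius that appear. Concretely, I would fix radii $0 < a < b$, let $e_i$ denote the $i$-th coordinate vector, and apply the divergence theorem to the vector field $V = f\, e_i$ on the shell $\{a \le |\xi| \le b\}$ (legitimate since $f$ is smooth on the compact shell, the origin being excluded throughout), obtaining
\[
\int_{a\le|\xi|\le b} \partial_{\xi^i} f \, d\xi
= \int_{|\xi|=b} f\, \frac{\xi_i}{|\xi|}\, dS - \int_{|\xi|=a} f\,\frac{\xi_i}{|\xi|}\, dS.
\]

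Next I would evaluate both sides in polar coordinates $\xi = r\omega$ with $r>0$, $\omega \in S^{d-1}$ and $d\xi = r^{d-1}\, dr\, d\omega$. Because $\partial_{\xi^i} f$ is homogeneous of degree $\rho - 1$, the left-hand side factors as $\bigl(\int_a^b r^{\rho+d-2}\, dr\bigr)\int_{S^{d-1}}\partial_{\xi^i} f(\omega)\, d\omega$; on each bounding sphere $\xi_i/|\xi| = \omega_i$ and $f(r\omega)=r^\rho f(\omega)$, so the right-hand side collapses to $\bigl(b^{\rho+d-1}-a^{\rho+d-1}\bigr)\int_{S^{d-1}}\omega_i\, f(\omega)\, d\omega$. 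Equating the two expressions isolates the quantity of interest against the first angular moment $\int_{S^{d-1}}\omega_i f\, d\omega$, and the degree $\rho$ enters only through the radial integral.

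The dichotomy now comes from that radial integral. For $\rho = 1-d$ the exponent is $-1$, so $\int_a^b r^{-1}\,dr = \ln(b/a)\neq 0$, while the radial factor on the boundary side is $b^{0}-a^{0}=0$; the identity then forces $\int_{S^{d-1}}\partial_{\xi^i} f\, d\omega = 0$ for every $i$ and every homogeneous $f$ of degree $1-d$, which is the ``if'' direction. For $\rho \neq 1-d$ one instead gets $\int_{S^{d-1}}\partial_{\xi^i} f\, d\omega = (\rho+d-1)\int_{S^{d-1}}\omega_i f\, d\omega$ with $\rho+d-1 \neq 0$. To prove the converse it then suffices to exhibit one homogeneous $f$ with a nonzero first moment: taking $f(\xi) = \xi_i\,|\xi|^{\rho-1}$, which is smooth and homogeneous of degree $\rho$ on $\R^d\setminus\{0\}$, yields $\int_{S^{d-1}}\omega_i f = \int_{S^{d-1}}\omega_i^{2}\, d\omega > 0$, so $\int_{S^{d-1}}\partial_{\xi^i} f \neq 0$. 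Hence vanishing for all admissible $f$ is possible only when $\rho = 1-d$.

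I expect the only genuinely delicate point to be the logical reading of the statement, which I would flag explicitly: for a \emph{single} fixed $f$ the spherical integrals can vanish accidentally at other degrees whenever all first angular moments happen to be zero (for instance for any rotation-invariant $f$, where symmetry kills $\int_{S^{d-1}}\omega_i\, d\omega$). Thus the equivalence must be understood with $f$ ranging over all homogeneous functions of the given degree, and it is precisely this universal form that is needed in Proposition~\ref{wprop}, where the derivative terms produced by the product formula \eqref{syprod} must integrate to zero for arbitrary symbols of order $-d$. The remaining computations are routine changes of variables and differentiation, carrying no analytic subtlety beyond excluding the origin.
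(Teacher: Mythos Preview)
Your argument is correct. Both directions go through cleanly: the divergence theorem on the shell, together with homogeneity, yields the identity $\int_{S^{d-1}}\partial_{\xi^i}f\,d\omega=(\rho+d-1)\int_{S^{d-1}}\omega_i f\,d\omega$ when $\rho\neq 1-d$, and forces the left side to vanish when $\rho=1-d$; the test function $f=\xi_i|\xi|^{\rho-1}$ then settles the converse. Your remark on the universal reading of the statement is also well taken and matches how the lemma is actually used in the paper.

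The route, however, differs from the paper's. The paper treats the two implications separately and by hand: for the necessity of $\rho=1-d$ it differentiates the same test function $\xi^j(\xi^2)^{(\rho-1)/2}$ directly and integrates; for the sufficiency it fixes explicit spherical coordinates $(r,\phi_1,\dots,\phi_{d-1})$, writes $\partial_{\xi^1}=\cos\phi_1\,\partial_r-r^{-1}\sin\phi_1\,\partial_{\phi_1}$, and checks that on the sphere the integrand becomes a total $\phi_1$-derivative precisely when $\rho=1-d$. Your approach is coordinate-free and packages both directions into a single moment identity, which is shorter and arguably more transparent; the paper's computation is more elementary in that it avoids invoking the divergence theorem, at the cost of an explicit chart and a longer calculation.
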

\begin{proof}
The $\Rightarrow$ part is trivial, as it is sufficient to take $f_j(\xi) = \xi^j (\xi^2)^{\frac{\rho-1}{2}}$. Then:
$$ \partial_i ( \xi^j (\xi^2)^{\frac{\rho}{2}-1} ) = \delta^{ij} (\xi^2)^{\frac{\rho-1}{2}} 
+ 2 \frac{\rho-1}{2} \xi^j \xi^i (\xi^2)^{\frac{\rho-1}{2}-1}, $$
which, when restricted to the sphere $\xi^2=1$ and integrated, gives:
$$ V(d) \left( 1 + (\rho-1) \frac{1}{d} \right), $$
where $V(d)$ is the volume of $d\!-\!1$ dimensional sphere. This vanishes only if $\rho=1\!-\!d$.

Assume now that we have a homogeneous function on $\R^d$ of degree $\rho$, denoted $f$. 
Observe that using the freedom of the choice of coordinates we can safely assume that $i=1$. 
Using the spherical coordinates $r,\phi_1,\ldots,\phi_{d-1}$:
$$ \xi^1 = r \sin \phi_1, \xi^2 = r \cos \phi_1 \sin \phi_2, \ldots, \xi^d = r \cos\phi_1 \cdots \cos \phi_{d-2} \cos \phi_{d-1},$$
we know the volume form on the sphere:
$$ \omega = (\sin \phi_1)^{d-2} (\sin \phi_2)^{d-3} \cdots \sin \phi_{d-2} \, d\phi_1 \cdots d\phi_{d-1}, $$
and we can express the partial derivative $\frac{\partial}{\partial x^1}$ as:
$$ \frac{\partial}{\partial \xi^1} = \cos \phi_1 \frac{\partial}{\partial r} 
    -\frac{\sin \phi_1}{r} \frac{\partial}{\partial \phi_1}. $$
    
Since the function $f$ is homogeneous in $r$ of order $\alpha$ we have:

$$  \frac{\partial f}{\partial \xi^1}  =   \cos \phi_1 \, \frac{\alpha}{r}  f  
    - \frac{\sin \phi_1}{r}\, \frac{\partial f}{\partial \phi_1}. $$

Consider now the following function in the coordinates $\phi_1,\ldots \phi_{d-1}$:
$$ \omega \left( \frac{\partial f}{\partial \xi^1} \right)_{|_{r=1}} = 
     \left(  \alpha (\sin \phi_1)^{d-2} (\cos \phi_1)  f_{|_{r=1}} 
           - (\sin \phi_1)^{d-1} \frac{\partial f_{|_{r=1}}}{\partial \phi_1}
      \right)   (\sin \phi_2)^{d-3} \cdots \sin \phi_{d-2} = \cdots $$
If $\alpha = (1\!-\!d)$ it could be written as:

$$  \cdots = \frac{\partial}{\partial \phi_1} \left( - (\sin \phi_1)^{d-1} f_{|_{r=1}} \right) 
(\sin \phi_2)^{d-3} \cdots (\sin \phi_{d-2}).$$
Since the integral of a function $f$ over the sphere $S^{d-1}$ in the spherical coordinates is:
$$ \int_{S^{d-1}} F = \int_0^{2\pi} d\phi_1 \int_0^\pi d\phi_2 \cdots \int_0^\pi d\phi_{d-1} 
(\sin \phi_1)^{d-2} (\sin \phi_2)^{d-3} \cdots \sin \phi_{d-2} \,  f(\phi_1,\ldots,\phi_{d-1}),$$
we have that:

$$ 
\begin{aligned} 
\int_{S^{d-1}} & \left( \frac{\partial f}{\partial \xi^1} \right)_{|_{r=1}} = \\
=& \int_0^\pi \!\!  d\phi_{d-1} \int_0^\pi \!\!  d\phi_{d-2} \sin\phi_{d-2} \cdots \int_0^\pi \!\!   d\phi_2 (\sin \phi_2)^{d-3} 
 \int_0^{2\pi} \!\!  d\phi_1 \! \!  \left( \frac{\partial}{\partial \phi_1} \left( - (\sin \phi_1)^{d-1} f_{|_{r=1}} \right) \right) 
 = 0.
\end{aligned}
$$ 
\end{proof}

\begin{proof}[Proof of Proposition \ref{wprop}]
Let $\rho$ and $\sigma$ be two symbols, that is $C^\infty$ maps from $\R^d$ to $\A(\T^d_\theta)$, which
are decomposed into the sum of homogeneous symbols:
$$ \sigma = \sum_{j \leq S} \sigma_j, \;\;\;\; \rho = \sum_{j \leq R} \rho_j. $$
We shall prove that the the Wodzicki residue is a trace, that is:
$$ \hbox{Wres}(\rho \sigma) = \hbox{Wres}(\sigma \rho). $$

Using the formula for the product (\ref{syprod}) we have:
$$ (\rho \sigma)_{-d} = \sum_{\stackrel{\gamma,j,k}{|\gamma|+j+k=-d}} 
\frac{1}{\gamma !} \partial_\xi^\gamma(\rho_j)  \delta^\gamma(\sigma_k),$$
and since derivation in $\xi$ decreases the degree of homogeneity by $1$, the sum
is necessarily finite. 

First we shall prove the trace property for $|\gamma|=0$ and $|\gamma|=1$. If $|\gamma|=0$
we have:

$$  \mathfrak{t} \left( \rho_j \sigma_k  \right) =  \mathfrak{t} \left(\sigma_k   \rho_j \right), $$
since $\mathfrak{t}$ is a trace on the algebra $\A(\T^d_\theta)$. 

Next, if $|\gamma|=1$, we have:

$$ 
\begin{aligned}
\int_{S^{d-1}} d\xi\, \mathfrak{t}\left( \partial_{\xi^i}(\rho_j) \delta_i(\sigma_k)  \right) 
&= \int_{S^{d-1}} d\xi \, \mathfrak{t}\left( - \delta_i(\partial_{\xi^i}(\rho_j)) \sigma_k  \right) \\
&= \int_{S^{d-1}} d\xi \, \mathfrak{t}\left( - \partial_{\xi^i}(\delta_i(\rho_j)) \sigma_k  \right) \\
&= \int_{S^{d-1}} d\xi \, \mathfrak{t}\left( \delta_i(\rho_j)  \partial_{\xi^i}(\sigma_k)  \right) \\
&= \int_{S^{d-1}} d\xi \, \mathfrak{t}\left(  \partial_{\xi^i}(\sigma_k)  \delta_i(\rho_j) \right),
\end{aligned}
$$
where we have used that $\mathfrak{t}$ is invariant with respect to $U(1)$ symmetry generated by
$\delta_i$,  the fact that $\delta_i$ and $\partial_{\xi^i}$ commute, then Lemma \ref{hlem} (which we can 
use because the product $\sigma_j \rho_j$ is homogeneous of degree $1\!-\!d$), and finally the trace 
property of $\mathfrak{t}$.

For any $|\gamma|>1$ we repeat the above argument sufficient number of times. 
\end{proof}

\section{Laplace-type operator of a conformally rescaled metric}

In this section we shall fix our attention on a family of Laplace-type operators, which originate 
from a conformally rescaled fixed metric on manifold. We begin with the classical situation. 
Let us take a closed Riemannian manifold $M$ of dimension $d$ with a fixed metric tensor $g$. 
If $h$ is a positive function on $M$ then we take the conformally rescaled metric to be given by:
$$ g_{ab} \to h^2 g_{ab} = \tilde{g}_{ab}, $$
where $g_{ab}$ is the original metric tensor.
\begin{lemm}\label{L1}
Let $\Delta$ be the usual Laplace operator on $M$ with the metric given by the metric tensor 
$g_{ab}$ and $\H$ be the Hilbert space of $L^2(M,g)$ (where the measure is taken with 
respect to the metric $g_{ab}$).  Let $\tilde{\Delta}$ be the Laplace operator 
on $M$ with the conformally rescaled metric acting on the Hilbert space 
$\tilde{\H} = L^2(M, \tilde{g})$. 

Then $\tilde{\Delta}$ is unitarily equivalent to $\Delta_h = 
h^{\frac{d}{2}} \tilde{\Delta} h^{-\frac{d}{2}}$ acting on
$\H$. Moreover, the operator $\Delta_h$ written in local coordinates is:
$$ 
\Delta_h = h^{-2} \Delta - 2 h^{-3} g^{ab} (\partial_a h) \partial_b 
    + h^{\frac{d}{2}-2} (\Delta h^{-\frac{d}{2}}). 
 $$
\end{lemm}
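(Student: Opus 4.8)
The plan is to work entirely in local coordinates and track the unitary transformation between the two $L^2$-spaces explicitly. On a $d$-manifold with metric $g$, the Laplace–Beltrami operator is $\Delta = -\frac{1}{\sqrt{g}}\partial_a(\sqrt{g}\, g^{ab}\partial_b)$ and the volume density is $\sqrt{g}\, d^dx$. Under $g_{ab}\mapsto h^2 g_{ab}$ one has $\sqrt{\tilde g} = h^d \sqrt g$ and $\tilde g^{ab} = h^{-2} g^{ab}$, so I would first write $\tilde\Delta = -\frac{1}{h^d\sqrt g}\partial_a\bigl(h^d\sqrt g\, h^{-2} g^{ab}\partial_b\bigr) = -\frac{1}{h^d\sqrt g}\partial_a\bigl(h^{d-2}\sqrt g\, g^{ab}\partial_b\bigr)$.

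Next I would identify the unitary $U\colon \H \to \tilde\H$. Since the measures differ by the factor $h^d$, the map $U\psi = h^{-d/2}\psi$ sends $L^2(M,g)$ isometrically onto $L^2(M,\tilde g)$ (because $\int |h^{-d/2}\psi|^2 \sqrt{\tilde g} = \int |\psi|^2 h^{-d} h^d \sqrt g = \int|\psi|^2\sqrt g$). Therefore $\tilde\Delta$ acting on $\tilde\H$ is unitarily equivalent to $U^{-1}\tilde\Delta\, U = h^{d/2}\tilde\Delta\, h^{-d/2}$ acting on $\H$, which is exactly the claimed $\Delta_h$; this is the first assertion, essentially a one-line observation once the measures are compared.

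The substantive part is the local-coordinate formula, which is a direct (if slightly tedious) expansion of $h^{d/2}\tilde\Delta h^{-d/2}$. Writing $\tilde\Delta$ in the form above and conjugating, I would apply the operator to a test function $\psi$, expand all the Leibniz-rule terms arising from $\partial_a\bigl(h^{d-2}\sqrt g\, g^{ab}\partial_b(h^{-d/2}\psi)\bigr)$, and reorganize by the number of derivatives falling on $\psi$: the pure second-order term reassembles into $h^{-2}\Delta$ (using $\frac{1}{\sqrt g}\partial_a(\sqrt g\, g^{ab}\partial_b)$), the first-order terms collect into $-2h^{-3} g^{ab}(\partial_a h)\partial_b$ after the $h^{d/2}$ and $h^{d-2}$ powers combine, and the zeroth-order term, by the same bookkeeping, is precisely $h^{d/2-2}$ times $\tilde\Delta'$-like action on $h^{-d/2}$, which one recognizes as $h^{d/2-2}(\Delta h^{-d/2})$ once the factor $h^{d-2}$ inside is rewritten via $h^{d/2}\cdot h^{d/2-2}$ and the $\sqrt g$-weighted divergence is read back as $\Delta$ acting on the scalar $h^{-d/2}$.

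I expect the main obstacle to be purely organizational: keeping the powers of $h$ straight through the two nested derivatives and making sure the cross-terms (one derivative on a power of $h$, one on $\psi$) that do \emph{not} obviously belong to the displayed answer actually cancel or combine correctly. A clean way to avoid sign and bookkeeping errors is to note that for any smooth positive $f$ and scalar $\phi$, $\Delta(f\phi) = f\Delta\phi - 2 g^{ab}(\partial_a f)(\partial_b\phi) + (\Delta f)\phi$, apply this identity twice — once to peel off $h^{-d/2}$ on the right and once to handle the $h^{-2}$ that converts $\Delta$ to $\tilde\Delta$ up to lower-order terms — and then collect. No step requires anything beyond the product rule and the definition of $\Delta$, so once the algebra is arranged this way the formula falls out.
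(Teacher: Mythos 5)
Your route is the same one the paper intends (its proof is just ``by explicit computation''): compare the measures $\sqrt{\tilde g}=h^d\sqrt g$, take multiplication by $h^{\mp d/2}$ as the unitary between $L^2(M,g)$ and $L^2(M,\tilde g)$, conjugate, and expand by the Leibniz rule. The unitary-equivalence statement and your collection of the second- and first-order terms are correct: the pure second-order part is $h^{-2}\Delta$ and the first-order parts do combine to $-2h^{-3}g^{ab}(\partial_a h)\partial_b$.

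The gap is in your zeroth-order bookkeeping. The zeroth-order coefficient of $h^{d/2}\tilde\Delta h^{-d/2}$ is exactly $h^{d/2}\bigl(\tilde\Delta h^{-d/2}\bigr)$ (apply the operator to the constant function $1$), i.e., writing $\Delta=\frac{1}{\sqrt g}\partial_a(\sqrt g\,g^{ab}\partial_b\,\cdot)$ up to an overall sign,
\[
\frac{h^{-d/2}}{\sqrt g}\,\partial_a\!\left(h^{d-2}\sqrt g\,g^{ab}\,\partial_b h^{-d/2}\right).
\]
Your step of ``rewriting the factor $h^{d-2}$ inside'' and then ``reading the $\sqrt g$-weighted divergence back as $\Delta$ acting on $h^{-d/2}$'' silently commutes $h^{d-2}$ past the outer $\partial_a$; the commutator term $(d-2)h^{d-3}(\partial_a h)\sqrt g\,g^{ab}\partial_b(h^{-d/2})$ does not vanish and yields
\[
h^{\frac d2}\bigl(\tilde\Delta h^{-\frac d2}\bigr)
= h^{\frac d2-2}\bigl(\Delta h^{-\frac d2}\bigr)
-\frac{d(d-2)}{2}\,h^{-4}\,g^{ab}(\partial_a h)(\partial_b h),
\]
so the two expressions agree only when $d=2$ or $dh=0$ (a one-line check on the circle, $d=1$, $g=1$, already shows the mismatch; the sign of the extra term depends on the sign convention for $\Delta$, its non-vanishing does not). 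The same lower-order term is exactly what gets dropped in your alternative suggestion of converting $\Delta$ into $\tilde\Delta$ ``up to lower-order terms''. So as written your derivation of the displayed zero-order coefficient does not go through for general $d$: you should either carry the extra gradient-squared term explicitly or record the zeroth-order coefficient as $h^{d/2}(\tilde\Delta h^{-d/2})$. Note that only the unitary-equivalence part of the lemma (and hence the operator form of Lemma \ref{L2}, which follows from it directly) is used later, so this issue does not propagate into the rest of the argument.
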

Proof is by explicit computation.

Next we shall restrict ourselves now to the case when $M$ is a $d$-dimensional 
torus, $\T^d$,  and the metric we begin with is a constant, flat metric.

\subsection{The case of $d$-dimensional torus}

Consider a flat $d$-dimensional torus, $\T^d = (S^1)^d$, with a constant  diagonal metric 
$g_{ab} = \delta_{ab}$. We take the usual system of coordinates on the torus (each circle 
parametrized by an angle) and from now on we assume that $\partial_a$ are the associated 
derivations. Take as $\H_0$ the Hilbert space of square summable functions with respect to 
the metric measure. An immediate consequence of Lemma \ref{L1} is:
\begin{lemm}\label{L2}
Let $h$ be a positive function on the torus $\T^n$ . The following operator:
$$ \Delta_h = \sum_{a=1}^n h^{-\frac{d}{2}} \partial_a 
( h^{d-2} \partial_a ) h^{-\frac{d}{2}} , $$
is unitarily equivalent to the Laplace operator of the conformally rescaled
metric $h^2 \delta_{ab}$.
\end{lemm}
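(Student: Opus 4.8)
The plan is to obtain the statement directly from Lemma~\ref{L1}, specialized to $M=\T^d$ with the constant flat metric $g_{ab}=\delta_{ab}$ (so that $n=d$); the only additional input is the explicit coordinate form of the Laplace--Beltrami operator of the rescaled metric.

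First I would record that, because $g=\delta$ is flat, $\sqrt{\det g}=1$, so $\H_0=L^2(\T^d,g)$ is simply $L^2$ of the ordinary translation-invariant measure $dx$ on $\T^d$, and the derivations $\partial_a$ are the standard coordinate ones. Next I would write down the Laplace--Beltrami operator $\tilde\Delta$ of $\tilde g_{ab}=h^2\delta_{ab}$ from $\tilde\Delta=\tfrac{1}{\sqrt{\det\tilde g}}\,\partial_a\!\big(\sqrt{\det\tilde g}\,\tilde g^{ab}\partial_b\big)$: since $\det\tilde g=h^{2d}$ and $\tilde g^{ab}=h^{-2}\delta^{ab}$, one gets
\begin{equation*}
\tilde\Delta \;=\; h^{-d}\sum_{a=1}^{d}\partial_a\big(h^{d-2}\partial_a\big),
\end{equation*}
acting on $\tilde\H=L^2(\T^d,\tilde g)=L^2(\T^d,h^{d}\,dx)$.

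Then I would invoke Lemma~\ref{L1}: it asserts that $\tilde\Delta$ on $\tilde\H$ is unitarily equivalent to $\Delta_h=h^{\frac{d}{2}}\,\tilde\Delta\,h^{-\frac{d}{2}}$ on $\H_0$, the intertwining unitary being multiplication by $h^{\frac{d}{2}}$, which is precisely the isometry $L^2(\T^d,h^{d}\,dx)\to L^2(\T^d,dx)$. Substituting the above expression for $\tilde\Delta$, using $h^{\frac{d}{2}}h^{-d}=h^{-\frac{d}{2}}$ and pulling the scalar multiplication operator $h^{-\frac{d}{2}}$ through the finite sum, I get
\begin{equation*}
\Delta_h \;=\; \sum_{a=1}^{d} h^{-\frac{d}{2}}\,\partial_a\big(h^{d-2}\partial_a\big)\,h^{-\frac{d}{2}},
\end{equation*}
which is exactly the operator in the statement; this completes the argument. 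As a sanity check one may also expand the right-hand side and compare with the local-coordinate formula of Lemma~\ref{L1}, recovering the terms $h^{-2}\Delta$, $-2h^{-3}\delta^{ab}(\partial_a h)\partial_b$ and the remaining zeroth-order multiplication term.

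I do not expect a genuine obstacle: as the text already announces, this is an immediate consequence of Lemma~\ref{L1}, and the whole content is the one-line identity $h^{\frac{d}{2}}\big(h^{-d}\partial_a h^{d-2}\partial_a\big)h^{-\frac{d}{2}}=h^{-\frac{d}{2}}\partial_a h^{d-2}\partial_a h^{-\frac{d}{2}}$. The only places to be careful are bookkeeping: confirming that the unitary furnished by Lemma~\ref{L1} is exactly multiplication by $h^{\frac{d}{2}}$ (equivalently, that the measures transform as $h^{d}\,dx\leftrightarrow dx$ in the flat case), and, if one prefers to check consistency with the coordinate formula of Lemma~\ref{L1} rather than with its operator form $h^{\frac{d}{2}}\tilde\Delta h^{-\frac{d}{2}}$, keeping the coefficient of the zeroth-order term straight, which is where an arithmetic slip is easiest.
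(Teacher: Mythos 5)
Your argument is correct and follows exactly the route the paper intends: Lemma \ref{L2} is presented there as an immediate consequence of Lemma \ref{L1}, obtained by specializing to the flat torus, writing $\tilde\Delta = h^{-d}\sum_a\partial_a(h^{d-2}\partial_a)$ for the rescaled metric, and conjugating by the unitary $h^{d/2}$ so that $h^{d/2}h^{-d}=h^{-d/2}$ yields the stated formula. Your bookkeeping (measure $h^d\,dx$, inverse metric $h^{-2}\delta^{ab}$, volume factor $h^d$) is accurate, so nothing further is needed.
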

This formula has been generalized to the noncommutative case in dimension
$4$ by \cite{FaKh4} to compute the curvature of the conformably rescaled Laplace operator 
on the noncommutative four-torus. However, even though the noncommutative generalization 
of the above prescription for the conformally rescaled Laplace operator makes sense, it does 
not exclude the possibility that the quantity computed is not exactly the scalar curvature.  The 
reason for this is the existence of torsion and the possibility that the above operator might not
be torsion-free in the noncommutative generalization. 

\subsection{Laplace-type operators on noncommutative tori}
 
We shall shall investigate the family of operators, which are noncommutative
generalizations of the above Laplace operator and which differ from them by terms 
of lower order. This guarantees that their principal symbol is unchanged and hence 
using the natural (albeit naive) notion of noncommutative metric we could say both 
operators determine the same metric. We begin with the following definition:

\begin{defi}
Let us take $h \in \A(\T^d_\theta)$ to be a positive element with a bounded inverse and
take the following densely defined operator on $\H$:
\begin{equation}
 \Delta_h = \sum_{a=1}^n h^{-\frac{d}{2}} \delta_a ( h^{d-2} \delta_a ) h^{-\frac{d}{2}} , 
\end{equation}
to be the Laplace operator on $d$-dimensional noncommutative torus.
\end{defi}
In both cases of $d=2$ and $d=4$ this has been studied as the Laplace operator of the
conformally rescaled noncommutative torus. 

The family which we intend to investigate now is,
\begin{defi}\label{D1}
A generalized family of Laplace operator for the conformally rescaled 
metric over the torus has a form:
\begin{equation}
\Delta = \Delta_h +  \sum_{a=1}^n \left( T^a \delta_a + \frac{1}{2} \delta_a(T^a) \right) + X , 
\end{equation}
where $T^a$ and $X$ are some selfadjoint elements of $\A(\T^d_\theta)$. 
This form could be rewritten as:
\begin{equation}
\Delta= h^{-2} \Big( \sum_a \delta_a^2 \Big) + \sum_a  Y^a \delta_a  + \Phi,
\label{mLa}
\end{equation}
where
$$ Y^a = h^{\frac{d}{2}-2} \delta_a (h^{-\frac{d}{2}}) 
+ h^{-\frac{d}{2}} \delta_a (h^{\frac{d}{2}-2}) + T^a,
$$
and
$$ \Phi =  h^{\frac{d}{2}-2} \left( \sum_a \delta_a^2 (h^{-\frac{d}{2}}) \right) + 
h^{-\frac{d}{2}} \left( \sum_a \delta_a(h^{d-2}) \delta_a(h^{-\frac{d}{2}}) \right) + \frac{1}{2} 
\sum_a \delta_a T^a + X.
$$
\end{defi}

The above Laplace-type operator (\ref{mLa}) has the following symbol:
\begin{equation}
\rho(\Delta) = h^{-2} \xi^2 + \sum_a Y^a \xi_a + \Phi, 
\label{mLaSy}
\end{equation}

From now on, we shall work only with the symbol $\rho(\Delta)$ (\ref{mLaSy}).

\section{Wodzicki Residue in dimension $4$}

Our aim will be to compute the Wodzicki residue of $\Delta^{-\frac{d}{2}+k}$ in the case of 
$d=4$ for $k=0,1$. As the only difficulty in considering the general case is purely computational, 
we postpone it for future work, concentrating instead on the relevant case of $d=4$. The 
significance of these computations lies in the classical relation between Wodzicki residue of the 
inverse of the Laplace operator (in the sense of pseudodifferential calculus) and the scalar of 
curvature in the classical case \cite{Gilkey}. We shall see, whether this extends to the 
noncommutative case. For simplicity we keep $X=0$, focusing on the 
parameters $h$ and $T_a$.

\subsection{The symbol of $\Delta^{-2}$ and $\Delta^{-1}$ }

We fix here $d=4$. 

\begin{lemm}
The Wodzicki residue of $\Delta^{-2}$ depends only on $h$:
$$ \hbox{Wres}(\Delta^{-2}) = 2\pi^2 \, \mathfrak{t}(h^{4}). $$
\end{lemm}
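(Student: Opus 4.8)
The plan is to compute the degree $-4$ part of the symbol of $\Delta^{-2}$ using the symbol calculus, since by Proposition \ref{wprop} the Wodzicki residue is $\int_{S^{3}} \mathfrak{t}\bigl((\Delta^{-2})_{-4}(\xi)\bigr)\,d\xi$. First I would write $\rho(\Delta^{-1}) = \sum_{j\geq 2} \sigma_{-j}$ with leading term $\sigma_{-2} = h^{2}\xi^{-2}$ (the inverse of the principal symbol $h^{-2}\xi^2$; note this requires care since $h^{-2}$ and $\xi^2$ commute so the leading inverse is unambiguous). The lower-order terms $\sigma_{-3},\sigma_{-4},\dots$ are determined recursively from the requirement $\rho(\Delta)\#\rho(\Delta^{-1}) = 1$, using the product formula (\ref{syprod}): at each homogeneity degree one solves a linear equation whose only ``new'' unknown appears multiplied by the principal symbol, hence is obtained by multiplying on the left by $\sigma_{-2}=h^2\xi^{-2}$.

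Then I would obtain $\rho(\Delta^{-2})$ either as $\rho(\Delta^{-1})\#\rho(\Delta^{-1})$ via (\ref{syprod}), or more efficiently by solving $\rho(\Delta)\#\rho(\Delta)\#\rho(\Delta^{-2}) = 1$ directly, or by using $\rho(\Delta^{-2}) = \rho(\Delta^{-1})\#\rho(\Delta)\#\rho(\Delta^{-2}\cdot\Delta^{-1}\cdot\Delta)$-type bookkeeping; in any case one needs the terms up to degree $-4$. The degree $-4$ term $(\Delta^{-2})_{-4}$ is a sum of: the ``diagonal'' contribution $\sigma_{-2}\cdot(\text{stuff})\cdot\sigma_{-2}$ built from the principal symbols only, which after tracing and integrating over $S^3$ will produce the $\mathfrak{t}(h^4)$ term, plus cross terms involving $Y^a$, $\Phi$, and $\xi$-derivatives. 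The key simplification I expect: all terms that are linear in $Y^a$ (hence linear in $T^a$) integrate to zero over the sphere by parity ($\xi \to -\xi$), since such terms carry an odd total power of $\xi$; likewise many $\Phi$- and $Y$-dependent contributions will either be odd in $\xi$ or, after using $\mathfrak{t}(\delta_i(\cdot))=0$ and the trace property, collapse. What should survive is a purely $h$-dependent expression homogeneous of degree $-4$ in $\xi$, of the form $(\text{polynomial in }h,\delta h)\,\xi^{-4}$ times angular factors; tracing kills the $\delta h$ pieces (again by $\mathfrak{t}\circ\delta_i=0$ plus rearrangement), leaving a multiple of $\mathfrak{t}(h^4)$.

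The main obstacle is bookkeeping: correctly tracking the noncommutativity of $h$ with the other $h$-factors through the recursive inversion, so that the final coefficient of $\mathfrak{t}(h^4)$ comes out right — in the commutative case this is routine, but here one must verify at each step that the surviving term genuinely reduces (via cyclicity of $\mathfrak{t}$ and $\mathfrak{t}\circ\delta_i = 0$) to a single trace of $h^4$ rather than an irreducible expression like $\mathfrak{t}(h\,\delta_a(h)\,h^{-1}\,\delta_a(h^3))$. The final step is the angular integral: $\int_{S^3}\xi_a\xi_b\,|\xi|^{-4}\,d\xi = \tfrac{1}{4}\delta_{ab}\int_{S^3}d\xi = \tfrac{1}{4}\cdot 2\pi^2\,\delta_{ab}$ and $\int_{S^3}|\xi|^{-4}\,d\xi = 2\pi^2$ (volume of $S^3$), together with the analogous fourth-moment formula $\int_{S^3}\xi_a\xi_b\xi_c\xi_e\,|\xi|^{-4}\,d\xi$ proportional to $\delta_{ab}\delta_{ce}+\delta_{ac}\delta_{be}+\delta_{ae}\delta_{bc}$, will assemble the numerical prefactor $2\pi^2$.
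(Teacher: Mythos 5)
Your overall plan (identify the degree $-4$ part of $\rho(\Delta^{-2})$ and apply Proposition \ref{wprop}) is the right one and would land on the correct answer, but the heart of your argument rests on a miscount of homogeneity degrees, and the mechanism you invoke for why $T^a$ and $\Phi$ drop out is not the correct one. Since $\rho(\Delta^{-1})=\sum_{j\ge 2}\sigma_{-j}$ starts at degree $-2$, the product formula (\ref{syprod}) shows that the degree $-4$ component of $\rho(\Delta^{-2})$ consists of exactly one term, the $\gamma=0$ product of the two leading symbols, $(h^{2}|\xi|^{-2})(h^{2}|\xi|^{-2})=h^{4}|\xi|^{-4}$: every contribution involving $\sigma_{-3}$, $\sigma_{-4}$, or a $\xi$-derivative is homogeneous of degree $\le -5$ and is simply not seen by the residue. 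The ``cross terms involving $Y^a$, $\Phi$, and $\xi$-derivatives'' that you expect to appear at degree $-4$ and then kill by parity, $\mathfrak{t}\circ\delta_i=0$, and cyclicity do not exist at that degree at all; presenting parity and trace identities as the reason the residue depends only on $h$ would therefore be an incorrect justification, even though the final formula would come out right. This degree count is precisely the paper's one-line proof: the symbol of $\Delta^{-2}$ starts with $h^{4}|\xi|^{-4}$, so $\hbox{Wres}(\Delta^{-2})=\int_{S^{3}}\mathfrak{t}(h^{4})\,d\xi=2\pi^{2}\,\mathfrak{t}(h^{4})$, using only $\mathrm{Vol}(S^{3})=2\pi^{2}$. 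No recursive computation of $\sigma_{-3}$, $\sigma_{-4}$, no noncommutative bookkeeping, and no second or fourth angular moments are needed here; that machinery (your $b_1$, $b_2$ recursion and the moment integrals) is what is required for $\hbox{Wres}(\Delta^{-1})$ in Lemma \ref{L3}, not for this statement.
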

\begin{proof}
It is sufficient to observe that the symbol $\rho(\Delta^{-2})$ starts with a homogeneous
symbol of order $-4$, which is exactly $h^4 |\xi|^4$. Hence, computing the Wodzicki 
residue as in proposition (\ref{wprop}) gives the above result. 
\end{proof}

To compute the Wodzicki residue of $\Delta^{-1}$ we need to calculate further
terms of the its symbol.  Observe, that for a differential operator of degree $2$ its 
symbol (split into part of homogenoeus degrees) reads:
$$ a_2 + a_1 + a_0, $$ 
where $a_k$ is homogeneous of degree $k$, and its inverse (paramatrix in the pseudodifferential 
calculus) is,
$$ b = b_0 + b_1 + b_2 + \ldots, $$
where each $b_k$ is homogeneous of degree $-2-k$ and could be iteratively computed from the 
following sequence of identities, which arise from comparing homogenous terms of the 
product $\Delta^{-1}$ and $\Delta$ using (\ref{mLaSy}):
\begin{equation}
\begin{aligned}
&b_0 a_2 = 1, \\
&b_1 a_2 + b_0 a_1 +  \partial_k(b_0) \delta_k(a_2) = 0, \\
&b_2 a_2 + b_1 a_1 + b_0 a_0 +  \partial_k(b_0) \delta_k(a_1) 
+  \partial_k(b_1) \delta_k(a_2) 
+ \oh \partial_k \partial_j (b_0) \delta_k \delta_j (a_2) = 0. 
\end{aligned}
\end{equation} 
The relations could be solved explicitely, giving:
\begin{equation}
b_2= - \left( b_0 a_0 b_0 + b_1 a_1 b_0 + \partial_j(b_0) \delta_j(a_1) b_0 +
 \partial_j(b_1)\delta_j(a_2)b_0  + \oh \partial_{jk}(b_0)\delta_j \delta_j(a_2) \right),
\label{b2}
\end{equation}
where 
\begin{equation}
\begin{aligned}
b_1 &= -\left( b_0 a_1 b_0  + \partial_k(b_0) \delta_k(a_2) b_0 \right), \\
b_0 &= ( a_2 )^{-1},
\end{aligned}
\label{b0b1}
\end{equation}

To simplify the notation above and in the remaining part of the note we use the Einstein notation 
(implicit summation over repeated indices). 

\begin{lemm} \label{L3}
For the pseudodifferential operator (\ref{mLaSy}) we have:

$$
\begin{aligned}
b_2(T,h)(\xi) = |\xi|^{-4} & 
  \left( \frac{1}{4} h^2 T_a h^2 T_a h^2 + \frac{1}{4} h^2 T_a \delta_a(h^2) \right. \\
& \left. \;\;\; - \frac{1}{4} \delta_a(h^2) h^{-2} \delta_a(h^2)  - \frac{1}{4} \delta_a(h^2) T_a  h^2  
          + \frac{1}{2} \delta_{aa}(h^2) \right)
\end{aligned}
$$
which leads to
$$
\begin{aligned}
\hbox{Wres}(\Delta)  = 2 \pi^2  & \left( \mathfrak{t}( h^2 T_a h^2 T_a h^2 )  \right. \\
& \left.  + \frac{1}{4} \mathfrak{t}( h^2 [T_a, \delta_a(h^2)])  \right. \\
& \left.  - \frac{1}{4} \mathfrak{t}( \delta_a(h^2) h^{-2} \delta_a(h^2)) \right).
\end{aligned}
$$
where we have used the trace property of $\mathfrak{t}$ and its invariance with respect to the
action of the derivations. 
\end{lemm}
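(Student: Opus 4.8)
The plan is to obtain $b_2$ from the closed formula \eqref{b2}, plugging in the explicit symbol data $a_2 = h^{-2}\xi^2$, $a_1 = Y^a\xi_a$, $a_0 = \Phi$ (with $X=0$), and then to extract the coefficient of $|\xi|^{-4}$ after restricting to the sphere. First I would record $b_0 = (a_2)^{-1} = h^2|\xi|^{-2}$ and note that $\delta_k(a_2) = \delta_k(h^{-2})\xi^2$, $\partial_k(b_0) = -2 h^2 \xi_k |\xi|^{-4}$, so that $\partial_k(b_0)\delta_k(a_2) b_0 = -2 h^2\xi_k\delta_k(h^{-2}) h^2 |\xi|^{-4}\xi^2 = -2h^2\xi_k\delta_k(h^{-2})h^2|\xi|^{-2}$; combined with $b_0 a_1 b_0 = h^2 Y^a\xi_a h^2|\xi|^{-4}$ this gives $b_1$ as an explicit degree-$(-3)$ symbol linear in $\xi$. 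The next step is the bookkeeping for $b_2$: each of the five terms in \eqref{b2} is computed separately, using $\partial_j b_0 = -2h^2\xi_j|\xi|^{-4}$, $\partial_{jk}b_0 = (-2\delta_{jk}|\xi|^{-4} + 8\xi_j\xi_k|\xi|^{-6})h^2$, and $\delta_j\delta_j(a_2) = \delta_{jj}(h^{-2})\xi^2$. All terms carry either $|\xi|^{-4}$ outright or an angular factor like $\xi_a\xi_b|\xi|^{-6}$.

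The key simplification is that the Wodzicki residue integrates only the homogeneous degree $-d = -4$ part over $S^3$, so every surviving term already has the right homogeneity and I only need the spherical averages $\int_{S^3}\xi_a\xi_b\, d\xi = \tfrac14 \delta_{ab}\,\mathrm{vol}(S^3)$ and $\int_{S^3}\xi_a\xi_b\xi_c\xi_d\, d\xi = \tfrac{1}{24}(\delta_{ab}\delta_{cd}+\delta_{ac}\delta_{bd}+\delta_{ad}\delta_{bc})\,\mathrm{vol}(S^3)$, with $\mathrm{vol}(S^3) = 2\pi^2$. After performing these angular integrations, the $\xi_a\xi_b$ type terms collapse the two free indices, turning products like $Y^a\cdots Y^b\delta_{ab}$ into $Y^a\cdots Y^a$, and I then substitute the definitions of $Y^a$ and $\Phi$ in terms of $h$ and $T_a$. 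At this stage one uses the trace $\mathfrak t$ heavily: cyclicity to consolidate the $h^{\pm d/2} = h^{\pm 2}$ factors, and invariance $\mathfrak t(\delta_a(\cdot)) = 0$ to integrate by parts and kill or combine the terms that are total derivatives. Here the conformal weights are arranged so that several $h$-only contributions (the pieces involving $\delta_a(h^{d-2})$, $\delta_{aa}$, etc.) cancel against each other, leaving only the three stated terms: the cubic $\mathfrak t(h^2 T_a h^2 T_a h^2)$, the commutator term $\tfrac14\mathfrak t(h^2[T_a,\delta_a(h^2)])$, and the curvature-like $-\tfrac14\mathfrak t(\delta_a(h^2)h^{-2}\delta_a(h^2))$.

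The main obstacle is purely the noncommutative bookkeeping in assembling $b_2$: because $h$, $T_a$, $Y^a$, $\Phi$ and their $\delta$-derivatives do not commute, one cannot freely reorder factors before taking the trace, so the intermediate expression for $b_2(T,h)$ is a long sum of ordered monomials in $h^{\pm 2}$, $T_a$, and $\delta$-derivatives thereof, and the cancellations only become visible after $\mathfrak t$ is applied and cyclicity plus $\delta$-invariance are used. I would organize this by grouping terms by their number of $T_a$ factors (degree $2$, $1$, $0$ in $T$) and checking the $T$-independent block separately — this is exactly the $h$-only sector, where the cancellation of the $\delta_{aa}(h^2)$ and $\delta_a(h^2)h^{-2}\delta_a(h^2)$-type contributions down to the single surviving term must be verified by integration by parts under $\mathfrak t$. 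Once the symbol $b_2(T,h)(\xi)$ is pinned down as displayed, the passage to $\hbox{Wres}(\Delta)$ is just the $S^3$ integration described above together with one more application of cyclicity to present the answer with the $h^2$ factors symmetrically placed.
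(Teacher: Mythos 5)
Your proposal follows essentially the same route as the paper: the lemma is justified there by exactly the direct computation you outline, namely building $b_0,b_1,b_2$ from (\ref{b0b1})--(\ref{b2}) with $a_2=h^{-2}\xi^2$, $a_1=Y^a\xi_a$, $a_0=\Phi$ (and $X=0$), restricting the degree $-4$ part to $S^3$ with the second-moment average $\int_{S^3}\xi_a\xi_b\,d\xi=\tfrac14\delta_{ab}\,2\pi^2$, and then using cyclicity and $\delta$-invariance of $\mathfrak{t}$ to reduce to the three stated terms. One small slip to fix when you execute it: in your intermediate expression the term $\partial_k(b_0)\delta_k(a_2)b_0$ equals $-2h^2\xi_k\delta_k(h^{-2})h^2|\xi|^{-4}$ (homogeneous of degree $-3$), not $\cdots|\xi|^{-2}$, consistent with your own statement that $b_1$ is a degree-$(-3)$ symbol.
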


Before we pass to the interpretation of the above result, let us consider the classical limit $\theta=0$.

\subsection{The commutative case}
We assume here that $\theta=0$, so $h$ and $T_a$ are smooth functions on a torus, which commute with 
each other (and with their derivations). 

\begin{lemm}
For the commutative torus the Wodzicki residue of $\Delta^{-1}$ is:
$$ 
\hbox{Wres}(\Delta^{-1}) = 2\pi^2 \int_{\T^4} \left( h^{6} (T_a T_a) - \delta_a(h)\delta_a(h) \right) dV,
$$
and for a fixed $h$ the term reaches an absolute extremum if and only if $T_a=0$, 
which has the interpretation of torsion-free Laplace operator.
\end{lemm}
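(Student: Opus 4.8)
The plan is to specialize Lemma~\ref{L3} to the commutative case $\theta=0$ and then analyze the resulting quadratic functional in the $T_a$.

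First I would substitute $\theta=0$ into the formula of Lemma~\ref{L3}. When $\theta=0$ the elements $h$, $T_a$ and all of their $\delta$-derivatives commute with one another, and the canonical trace $\mathfrak{t}$ is just integration against the normalized flat volume form, $\mathfrak{t}(a)=\int_{\T^4} a\,dV$. Under this identification the three terms of $\hbox{Wres}(\Delta^{-1})$ collapse as follows: the commutator term $\tfrac{1}{4}\mathfrak{t}(h^2[T_a,\delta_a(h^2)])$ vanishes identically; the leading term becomes $\mathfrak{t}(h^2 T_a h^2 T_a h^2)=\int_{\T^4}h^6 T_a T_a\,dV$; and, writing $\delta_a(h^2)=2h\,\delta_a(h)$, the last term becomes $\tfrac{1}{4}\mathfrak{t}(\delta_a(h^2)h^{-2}\delta_a(h^2))=\tfrac{1}{4}\int_{\T^4}h^{-2}\bigl(2h\,\delta_a h\bigr)^2\,dV=\int_{\T^4}\delta_a(h)\delta_a(h)\,dV$. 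Adding these gives precisely $2\pi^2\int_{\T^4}\bigl(h^6 T_a T_a-\delta_a(h)\delta_a(h)\bigr)\,dV$, which is the claimed formula.

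Next I would examine the dependence on the $T_a$ for a fixed $h$. The contribution $-2\pi^2\int_{\T^4}\delta_a(h)\delta_a(h)\,dV$ does not involve $T_a$ at all, so it is a constant, while the remaining piece is $2\pi^2\sum_{a=1}^4\int_{\T^4}h^6 T_a^2\,dV$. Since $h$ is positive with bounded inverse, $h^6>0$ pointwise, so each summand is nonnegative and equals zero exactly when $T_a\equiv 0$ (smoothness turning "almost everywhere" into "everywhere"). Hence $\hbox{Wres}(\Delta^{-1})\ge -2\pi^2\int_{\T^4}\delta_a(h)\delta_a(h)\,dV$ with equality if and only if all $T_a$ vanish; this is the unique absolute extremum — it is a minimum, and there is no maximum since the $T_a$ may be scaled without bound. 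Finally I would remark that $T_a=0$ is exactly the case in which $\Delta$ reduces to $\Delta_h$, the conformally rescaled Laplace--Beltrami operator of Lemma~\ref{L2}, which comes from the Levi--Civita connection and is therefore \emph{torsion-free}; the term $\int_{\T^4}h^6 T_a T_a\,dV$ plays the role of the square-of-torsion contribution familiar from the classical Seeley--Gilkey--DeWitt expansion, which is why switching it off minimizes the residue.

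I do not expect a genuine obstacle here: the statement is a direct corollary of Lemma~\ref{L3} together with an elementary positivity argument. The only points requiring care are bookkeeping ones — tracking the factor $(2\pi)^{-4}$ relating $\mathfrak{t}$ to the Lebesgue integral on $\T^4$ and checking that it is absorbed into the normalization of $dV$, and confirming that the torsion term enters with a $+$ sign, so that the extremum is honestly a minimum rather than a saddle.
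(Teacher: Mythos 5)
Your proposal is correct and follows exactly the route the paper intends: the lemma is a direct specialization of Lemma~\ref{L3} to $\theta=0$, where the commutator term drops, $\mathfrak{t}$ becomes the integral, $\tfrac14\,\delta_a(h^2)h^{-2}\delta_a(h^2)$ reduces to $\delta_a(h)\delta_a(h)$, and positivity of $h^6$ gives the extremum (a minimum) precisely at $T_a=0$. The paper in fact states the lemma without writing out this computation, so your argument supplies the same proof it leaves implicit.
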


Observe that the classical results of Kastle and Kalau-Walze \cite{Ka,KaWa} give (for Laplace-Beltrami 
operator):
$$ \hbox{Wres}(\Delta^{-1}) = 2 \pi^2 \int_M \sqrt{g} \, \left( \frac{1}{6} R \right), $$
where $R$ is the scalar curvature.

In the conformally rescaled metric the volume form and the curvature (in dimension $d=4$) are:
$$ \sqrt{g} = h^4, \;\;\;\; R = 6 h^{-3} \delta_{aa}(h), $$
so we obtain the same result.

\subsection{Nonminimal operators and curvature}

In the classical (commutative) situation the additional first-order term in the Laplace-type operator 
$\Delta$ contributes to the Wodzicki residue of $\Delta^{-1}$ with a term proportional to 
$T_a T_a$. As already noted, for {\em minimal operators} (like Laplace-Beltrami) such term vanishes
and, equivalently, one can say that {\em minimal operators} are such Laplace-type operators, which,
at the fixed metric  minimize the Wodzicki residue.

In the noncommutative case we have two terms, one which is quadratic in $T_a$ and the second one,
which is linear in $T_a$ and involves a commutator with $\delta_a(h^2)$. Therefore one can clearly
state the following corollary,

\begin{coro}
A naive generalization of Laplace-Beltrami operator for the conformally rescaled metric 
to the noncommutative case (in dimension $d=4$) as proposed in (3.1) is not a minimal 
operator in the above sense (does not minimize the Wodzicki residue of $\Delta^{-1}$ 
with $h$ fixed).
\end{coro}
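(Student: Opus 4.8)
The plan is to read everything off Lemma~\ref{L3}. With $X=0$ and $h$ held fixed, that lemma expresses the residue as a function of the self-adjoint tuple $(T_a)$ alone,
$$\hbox{Wres}(\Delta^{-1}) = 2\pi^2\bigl(Q(T)+L(T)+c\bigr),$$
where $Q(T)=\mathfrak{t}(h^2T_ah^2T_ah^2)$ is quadratic in $T$, the term $L(T)=\tfrac14\mathfrak{t}(h^2[T_a,\delta_a(h^2)])$ is linear, and $c=-\tfrac14\mathfrak{t}(\delta_a(h^2)h^{-2}\delta_a(h^2))$ is $T$-independent. Proving the corollary amounts to showing that the affine-quadratic functional $Q+L$, on the admissible family of self-adjoint $(T_a)$, is not minimized at $T=0$.

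First I would check that $Q$ is a \emph{positive-definite} quadratic form. Using the trace property and self-adjointness of $h$ and $T_a$, one rewrites $\mathfrak{t}(h^2T_ah^2T_ah^2)=\mathfrak{t}\bigl((hT_ah^2)^{\ast}(hT_ah^2)\bigr)\ge 0$ for each $a$, with equality exactly when $hT_ah^2=0$, i.e.\ (as $h$ is invertible) when $T_a=0$. Hence $Q+L$ is strictly convex with a unique minimizer, which — since the gradient of the purely quadratic part $Q$ vanishes at the origin — coincides with $T=0$ precisely when the linear functional $L$ vanishes identically on self-adjoint tuples. So the question reduces to deciding when $L\equiv0$; along the way this also makes rigorous the notion of a ``minimal operator'' as the one realizing this unique minimum.

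Next I would rewrite $L$ by cyclicity of $\mathfrak{t}$ as $L(T)=\tfrac14\mathfrak{t}(T_a\,Y_a)$, where $Y_a:=[\delta_a(h^2),h^2]$. The element $Y_a$ is self-adjoint: it is the commutator of the self-adjoint $h^2$ with the skew-adjoint $\delta_a(h^2)$ — recall that $\delta_a$, being (morally) $-i\partial_a$, carries self-adjoint elements to skew-adjoint ones, i.e.\ $(\delta_a x)^{\ast}=-\delta_a(x^{\ast})$. Feeding the admissible choice $T_b=\delta_{ab}Y_a$ into $L$ yields $\tfrac14\mathfrak{t}(Y_a^2)=\tfrac14\|Y_a\|^2\ge 0$, which vanishes only if $Y_a=0$ by faithfulness of $\mathfrak{t}$. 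Therefore $L\equiv0$ if and only if $[\delta_a(h^2),h^2]=0$ for all $a$ — a commutativity condition which holds only in the commutative-type situation of the preceding lemma and fails for generic positive invertible $h\in\A(\T^4_\theta)$ (e.g.\ $h=1+\eps(U_1+U_1^{\ast}+U_2+U_2^{\ast})$ with $\eps$ small, for which $[\delta_1(h^2),h^2]\ne 0$ because $U_1$ and $U_2$ do not commute). For such $h$ the origin is not the minimizer, as one sees directly by putting $T_a=-\lambda Y_a$ with $\lambda>0$ small:
$$Q(T)+L(T)=\lambda^2\,Q(Y)-\tfrac\lambda4\textstyle\sum_a\mathfrak{t}(Y_a^2)<0=Q(0)+L(0),$$
so $\hbox{Wres}(\Delta^{-1})$ drops below its value $2\pi^2c$ at $T=0$. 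This is the corollary.

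The step requiring the most care — the ``main obstacle'', such as it is — is the self-adjointness bookkeeping: only self-adjoint $T_a$ are admissible, so one must genuinely verify that $Y_a=[\delta_a(h^2),h^2]$ and hence $-\lambda Y_a$ lie in the admissible family, which is exactly where the skew-adjointness of the derivations on self-adjoint elements enters. It is also worth remarking (though not needed for the logic) that the $T$-independent constant $c=-\tfrac14\mathfrak{t}(\delta_a(h^2)h^{-2}\delta_a(h^2))\ge 0$ is the residue of the naive operator and, by the commutative computation, reduces to the curvature term; the content of the corollary is precisely that in the genuinely noncommutative case this value is not the extremal one.
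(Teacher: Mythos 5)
Your proposal is correct and takes essentially the same route as the paper: the paper's justification of the corollary is precisely the observation, read off from Lemma~\ref{L3}, that $\hbox{Wres}(\Delta^{-1})$ contains, besides the positive term quadratic in $T_a$, a term linear in $T_a$ given by the commutator $\frac14\mathfrak{t}(T_a[\delta_a(h^2),h^2])$, so $T_a=0$ cannot be the minimizer once this linear functional is nonzero. You merely make explicit what the paper leaves implicit --- positive definiteness of the quadratic part, self-adjointness (hence admissibility) of the descent direction $Y_a=[\delta_a(h^2),h^2]$, an explicit $h$ with $Y_a\neq 0$, and the small-$\lambda$ perturbation $T_a=-\lambda Y_a$ --- which is a welcome sharpening rather than a different argument.
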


A significant consequence of this fact is that no simple identification of the scalar of curvature is 
possible: indeed, if have no means of identifying {\em minimal} (or unperturbed, or torsion-free) 
Laplace-type operators we cannot possibly recover the geometric invariants associated with the 
metric alone.

\section{Conclusions and open problems}

The computations aimed to show that the Wodzicki residue of the noncommutative generalization
of the Laplace-type operator on the $4$-dimensional noncommutative torus with a conformally recalled
metric is a nontrivial functional on the parameters $h$ and $T_a$. There are several interesting
problems, which arise that are linked to our result.

First of all, computations of the Wodzicki residue are closely linked to heat-kernel coefficients. As 
the principal symbol fails to be scalar, this is not happening for the noncommutative tori. An 
interesting point is then to link the Wodzicki residue in this case to the respective heat-kernel 
coefficients, which then, in turn, appear in the spectral action. 

To study the geometric notions like curvature and geometric constructions like scalar curvature
(at least as a noncommutative analogue of the classical one) it is important to identify the class
of minimal Laplace-type operators. Otherwise, the functional would not only depend on the metric
but also on some additional data. Our result shows that the naive generalization of minimal operator
fails to be minimal in the noncommutative case (at least in the proposed sense). 

A natural question, which arises in this context, is about a proper definition of Dirac and Laplace
operators on noncommutative tori. Even though the flat situation appears to be quite well 
understood, even a slight deviation, like the conformal rescaling, discussed in this note, changes
completely the picture. Unlike classical case we still cannot identify the components of such 
operators, which are of intrinsic geometric origin and distinguish them from the additional degrees
of freedom (like torsion).
                  

\end{document}